\def\la{\langle}
\def\ra{\rangle}
\def\dbar{\bar\partial}
\def\C{{\mathbb C}}
\def\Ok{{\mathcal O}}
\DeclareMathOperator{\rank}{rank}
\DeclareMathOperator{\ann}{ann}
\DeclareMathOperator{\supp}{supp}
\DeclareMathOperator{\Id}{Id}
\DeclareMathOperator{\Hom}{Hom}
\def\be{\begin{equation}}
\def\ee{\end{equation}}
\newtheorem{thm}{Theorem}[section]
\newtheorem{lma}[thm]{Lemma}
\newtheorem{prop}[thm]{Proposition}
\theoremstyle{definition}
\theoremstyle{remark}
\newtheorem{preremark}[thm]{Remark}
\newtheorem{preex}[thm]{Example}
\newenvironment{remark}{\begin{preremark}}{\end{preremark}}
\newenvironment{ex}{\begin{preex}}{\end{preex}}
\numberwithin{equation}{section}
\begin{document}

\title[Elementary construction of residue currents]{Elementary construction of residue currents associated to Cohen-Macaulay ideals}

\date{\today}

\author{Richard L\"ark\"ang}
\address{Richard L\"ark\"ang, Department of Mathematics,
Chalmers University of Technology and the University of Gothenburg, 412 96 G\"oteborg, Sweden.}

\email{larkang@chalmers.se}

\author{Emmanuel Mazzilli}
\address{Emmanuel Mazzilli,
    Laboratoire Paul Painlev\'e U.M.R. CNRS 8524, U.F.R. de Math\'ematiques, cit\'e scientifique,
    Universit\'e Lille 1, F59 655 Villeneuve d’Ascq Cedex, France.}

\email{emmanuel.mazzilli@math.univ-lille1.fr}


\keywords{}

\thanks{The first author was supported by the Swedish Research Council.}

\begin{abstract}
    For a Cohen-Macaulay ideal of holomorphic functions, we construct by elementary means
    residue currents whose annihilator is precisely the given ideal. We give two proofs that
    the currents have the prescribed annihilator, one using the theory of linkage,
    and another using an explicit division formula involving these residue currents 
    to express the ideal membership.
\end{abstract}

\maketitle

\section{Introduction}

Let $\Ok := \Ok_{\C^n,0}$ be the ring of germs of holomorphic functions at $0 \in \C^n$.
If $f \in \Ok$, and $U$ is a $(0,0)$-current
such that $f U = 1$, then it follows easily by regularity for the $\dbar$-operator on $(0,0)$-currents
that
\begin{equation} \label{eq:div1}
    g \dbar U = 0 \text{ if and only if } g \in J(f),
\end{equation}
where $J(f)$ is the principal ideal generated by $f$. For a current $T$, we let $\ann T$
denote the annihilator of $T$, i.e., all holomorphic functions $g$ such that $g T = 0$. Thus, if $f U = 1$,
we get that
\begin{equation*}
    \ann \dbar U = J(f).
\end{equation*}
One natural choice of such a current $U$ is the so-called principal value current $[1/f]$,
which is defined as
\begin{equation*}
    \left \la \left[\frac{1}{f}\right] , \phi \right\ra := \lim_{\epsilon \to 0^+} \int \frac{\chi(|f|^2/\epsilon)}{f} \wedge \phi,
\end{equation*}
where $\phi$ is a test form and $\chi$ is the cut-off function which is the characteristic function of the interval $[1,\infty)$,
or a smooth regularization of this function.
The existence of this current was proven by Dolbeault, \cite{Dol}, and Herrera-Lieberman, \cite{HL}.
That this limit exists relies on Hironaka's theorem about resolution of singularities, and is thus
far from elementary. Anyhow, any such choice of a current $U$ gives rise to a description of a
principal ideal $J(f)$.
A construction of such a current by elementary means, which in general is different from the
principal value current was done by the second author in \cite{MazCR}.

Consider now a complete intersection ideal $J$ of codimension $p$, i.e., $J = J(f_1,\dots,f_p)$ can be generated by exactly $p$
holomorphic functions, $f_1,\dots,f_p$. Coleff and Herrera showed in \cite{CH} that one can give a reasonable meaning to
$\dbar [1/f_p] \wedge \dots \wedge \dbar [1/f_1]$ in a similar way as for the principal value current.
Again, for all the different ways of regularizing the current, the existence of the limit relies on Hironaka's theorem.
In \cite{LS} it is described various ways that this product can be defined through some regularization procedure.
It was proven independently by Passare, \cite{PMScand} and Dickenstein-Sessa, \cite{DS}, that this so-called
Coleff-Herrera product satisfies the duality principle,
\begin{equation} \label{eq:ch-duality}
    \ann \dbar \left[\frac{1}{f_1}\right] \wedge \dots \wedge \dbar \left[\frac{1}{f_p}\right]
    = J(f_1,\dots,f_p).
\end{equation}
The proof of Passare relied on constructing an explicit division formula involving the Coleff-Herrera product
in order to obtain the ideal membership, while the proof in \cite{DS} essentially reduced to solving a series
of $\dbar$-equations.

Especially in relation to extension problems of holomorphic functions, it has turned out to be useful
to consider other currents for describing complete intersection ideals similar to \eqref{eq:ch-duality}.
It turns out that, generalizing the case of principal ideals in the beginning, if $J = J(f_1,\dots,f_p)$ is a complete
intersection ideal of codimension $p$, and if $X_k$ are $(0,k-1)$-currents for $k=1,\dots,p$ such that
\begin{equation} \label{eq:ci-conditions}
    f_1 X_1 = 1 \text{, } f_j X_k = 0 \text{ for $1 \leq j < k \leq p$ and } f_k X_k = \dbar X_{k-1} \text{ for } 2 \leq k \leq p,
\end{equation}
then
\begin{equation} \label{eq:ci-duality}
    \ann \dbar X_p = J(f_1,\dots,f_p).
\end{equation}
In \cite{MazJMAA}, the second author gave an elementary construction of such currents for any complete intersection ideal,
using only the much more elementary Weierstrass preparation theorem, and not relying on Hironaka's theorem.

Consider now a more general ideal $J = J(f_1,\dots,f_m)$, which is not necessarily a complete intersection ideal.
In \cite{AW1}, Andersson and Wulcan constructed, given a free resolution $(E,\varphi)$ of $\Ok/J$, a
($\Hom(E_0,E)$-valued) current $R^E$ such that
\begin{equation*}
    \ann R^E = J,
\end{equation*}
and two proofs of this description of the annihilator were given, one essentially reducing ideal membership to solving a series of $\dbar$-equations,
and the second by constructing an explicit division formula.
If $J = J(f_1,\dots,f_p)$ is a complete intersection ideal, and one takes the Koszul complex of $f$ as
a free resolution of $\Ok/J$, then $R^E$ equals the Coleff-Herrera product of $f$.
In general, although the current $R^E$ is explicitly expressed in terms of the free resolution $(E,\varphi)$,
it is in general quite difficult to understand, and the proof of existence of this current again relies
on Hironaka's theorem.

In \cite{LarComp}, the first author described a way of relating the currents $R^E$ of Andersson and Wulcan,
related to different free resolutions, of possibly different ideals. We consider the particular case
when $J$ is a Cohen-Macaulay ideal of codimension $p$, i.e., $\Ok/J$ has a free resolution $(E,\varphi)$
of length $p$. We also assume that $\rank E_0 = 1$, which is always possible to choose. One can always
find a complete intersection ideal $I = J(f_1,\dots,f_p)$ of codimension $p$
contained in $J$, for example by taking $p$ generic linear combinations of a set of generators of $J$,
cf., for example \cite{LarComp}*{Example~2}.
If one lets $(K,\psi)$ be the Koszul complex of $f$, then it is quite elementary homological
algebra that one can construct a morphism of complexes $a : (K,\psi) \to (E,\varphi)$ which extends the
natural surjection $\pi : \Ok/I \to \Ok/J$, i.e., which is such that the following diagram is commutative:
\begin{equation} \label{eq:amorphism}
    \begin{gathered}
\xymatrix{
    0 \ar[r] & E_p \ar[r]^{\varphi_p}& E_{p-1} \ar[r] & \cdots \ar[r]^{\varphi_1} & E_0 \ar[r] &\Ok/J \ar[r] & 0 \\
  0 \ar[r]  & K_p \ar[r]^{\psi_p} \ar[u]^{a_p}&  K_{p-1}\ar[r] \ar[u]^{a_{p-1}} & \cdots \ar[r]^{\psi_1}& K_0 \ar[u]^{a_0}  \ar[r] & \Ok/I \ar[u]^{\pi}\ar[r] & 0,
}
  \end{gathered}
\end{equation}
cf., Proposition~\ref{prop:comparison-morphism} below.
By \cite{LarComp}*{Example~3}, the current $R^E$ can then be described as
\begin{equation} \label{eq:comparison}
    R^E = a_p(e) \dbar \left[\frac{1}{f_1}\right] \wedge \dots \wedge \dbar \left[\frac{1}{f_p}\right],
\end{equation}
where $e_1,\dots,e_p$ is a frame for $K_1$ such that $\psi_1 = f_1 e_1^* + \dots + f_p e_p^*$, and
$e := e_p \wedge \dots \wedge e_1$ is the induced frame for $K_p \cong \bigwedge^p K_1$.
Hence, the current $R^E$ can be described as an explicit tuple of holomorphic functions times
a Coleff-Herrera product.
If $J = J(g_1,\dots,g_p)$ is also a complete intersection ideal of codimension $p$, and $(E,\varphi)$
is the Koszul complex of $g$, then $R^E$ is also a Coleff-Herrera product, and \eqref{eq:comparison}
then becomes the transformation law for Coleff-Herrera products, see \cite{LarComp}*{Remark~2}.

Our main result is the following combination of \eqref{eq:ci-duality} and \eqref{eq:comparison},
which thus with the help of the construction from \cite{MazJMAA} allows for constructing currents
representing Cohen-Macaulay ideals by elementary means, in particular not relying on Hironaka's 
theorem about resolution of singularities.

\begin{thm} \label{thm:main}
    Let $J$ be a Cohen-Macaulay ideal of codimension $p$, $(E,\varphi)$ be a free resolution
    of $\Ok/J$ such that $\rank E_0 = 1$, $I = J(f_1,\dots,f_p)$ a complete intersection ideal of codimension $p$
    contained in $I$, $(K,\psi)$ the Koszul complex of $f$, and let $a : (K,\psi) \to (E,\varphi)$
    be a morphism of complexes extending the natural surjection $\pi : \Ok/I \to \Ok/J$ as in \eqref{eq:amorphism}.
    If $X_1,\dots,X_p$ are currents satisfying \eqref{eq:ci-conditions}, then
    \begin{equation*}
        \ann a_p(e)\dbar X_p = J.
    \end{equation*}
\end{thm}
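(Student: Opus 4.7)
My plan is to establish the two inclusions of $\ann a_p(e)\dbar X_p=J$ by quite different arguments, combining the already-known annihilator $\ann\dbar X_p=I$ from \eqref{eq:ci-duality} with commutative-algebraic input about the comparison morphism $a$.

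For the inclusion $J\subseteq\ann a_p(e)\dbar X_p$, I would argue by the standard null-homotopy of multiplication by an element of the ideal on its free resolution. For $g\in J$, the chain endomorphism $g\cdot\Id_E$ of $(E,\varphi)$ descends to $0$ on $\Ok/J$, so there exist $\Ok$-linear maps $h_k\colon E_k\to E_{k+1}$ with $g\cdot\Id_{E_k}=\varphi_{k+1}h_k+h_{k-1}\varphi_k$. Since $E_{p+1}=0$, at $k=p$ this becomes $g\cdot\Id_{E_p}=h_{p-1}\varphi_p$; applying to $a_p(e)$ and using the intertwining $\varphi_pa_p=a_{p-1}\psi_p$ together with $\psi_p(e)=\sum_j(-1)^{j+1}f_j\hat e_j$ gives
\[g\cdot a_p(e)=\sum_{j=1}^p(-1)^{j+1}f_j\, h_{p-1}(a_{p-1}(\hat e_j))\ \in\ I\cdot E_p.\]
Multiplying by $\dbar X_p$ and using $f_j\dbar X_p=0$ (a direct consequence of \eqref{eq:ci-duality}) yields $g\cdot a_p(e)\dbar X_p=0$, as required.

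For the reverse inclusion $\ann a_p(e)\dbar X_p\subseteq J$, fix a basis $\{\epsilon_\alpha\}$ of $E_p$ and expand $a_p(e)=\sum_\alpha c_\alpha\epsilon_\alpha$. If $h\cdot a_p(e)\dbar X_p=0$, then componentwise $hc_\alpha\dbar X_p=0$, and \eqref{eq:ci-duality} forces $hc_\alpha\in I$ for every $\alpha$, so $h\in I\colon\la c_\alpha\ra$. Since $I$ is a complete intersection of codimension $p$ contained in the Cohen--Macaulay ideal $J$ of the same codimension, the Peskine--Szpiro double-linkage identity $J=I\colon(I\colon J)$ applies, and it is enough to verify $\la c_\alpha\ra+I=I\colon J$. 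This is the substantive input from the theory of linkage: via the identification $\omega_{\Ok/J}\cong(I\colon J)/I$, the cohomology $H^p(\Hom(E,\Ok))\cong\omega_{\Ok/J}$ is represented by $E_p^*/\Im\varphi_p^*$, and the map to $H^p(\Hom(K,\Ok))\cong\Ok/I$ induced by $a^*$ sends $\eta\in E_p^*$ to $\eta(a_p(e))$, so its image is exactly $\la c_\alpha\ra/I\subseteq\Ok/I$, which under the linkage identification coincides with $(I\colon J)/I$.

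The first inclusion is essentially formal homological algebra; the main obstacle is the second, where one must identify the ideal generated by the coefficients of $a_p(e)$ with $I\colon J$ modulo $I$. An alternative route indicated by the abstract, and which I expect is cleaner in this analytic setting, is to bypass linkage by constructing an explicit division formula: starting from a Koppelman-type integral representation adapted to the resolution $(E,\varphi)$ and the currents $X_k$, one would express any $h$ with $h\cdot a_p(e)\dbar X_p=0$ as an $\Ok$-linear combination of a chosen set of generators of $J$, in the spirit of Passare's proof of \eqref{eq:ch-duality}.
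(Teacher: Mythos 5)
Your proposal is correct and follows essentially the same route as the paper's first proof: both rest on the double-linkage identity $J = I:(I:J)$, on identifying $I:J$ with $I$ plus the ideal generated by the entries of $a_p$ (the paper's Lemma~\ref{lma:colon}, cited from Fouli--Huneke, which you instead sketch via $\operatorname{Ext}$-duality), and on the complete-intersection duality \eqref{eq:ci-duality}. The only cosmetic difference is that you handle the easy inclusion by a direct null-homotopy argument while the paper deduces both inclusions at once from $J = I:L$; your closing remark about a division formula corresponds to the paper's second proof in Section~\ref{sect:integral}.
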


The requirement that $\rank E_0 = 1$ implies that the entries of $\varphi_1$ generate $J$,
and one can always find a free resolution such that this is the case.

We give two different proofs of this result, one in Section~\ref{sect:linkage},
which with the help of the theory of linkage reduces the problem to the complete
intersection case and \eqref{eq:ci-duality}, and as well a more direct proof
in Section~\ref{sect:integral} by means of an explicit division formula for
expressing the ideal membership.

In \cite{Lund}, Lundqvist defined by elementary means cohomological residues for a Cohen-Macaulay $J$,
which act on test forms which are $\dbar$-closed in a neighborhood of $\supp J$.
By the construction in \cite{Lund}, it follows easily that the action of the current $R^E$ on such 
test forms equals the residues by Lundqvist. These residues and its relation to other residues
is elaborated a bit in \cite{LExpl}*{Section~7}.
Since these cohomological residues are only defined acting on a restricted class of test forms,
the construction can be done by elementary means, depending only on finding a free resolution,
and in particular avoiding resolutions of singularities. This is at the cost of not showing
that these residues can act on arbitrary test forms. However, the main result in \cite{Lund} is that
even by only acting on this restricted class of test forms, one still obtains a duality theorem.

\section{Proof by the theory of linkage} \label{sect:linkage}

In this section, we give the first proof of Theorem~\ref{thm:main}, which is based on
the theory of linkage. In a somewhat different setting, similar methods were used
in \cite{LExpl}.
We recall that if $I$ and $J$ are two ideals in a ring $R$, then $I : J$ is the ideal
$I : J = \{ r \in R \mid rJ \subseteq I \}$.
The key result in proving Theorem~\ref{thm:main} is the following result, which
can be found in (the proof of) \cite{Vasc}*{Proposition~3.41}.

\begin{thm} \label{thm:colon}
    Let $J \subseteq \Ok$ be an ideal of pure codimension $p$,
    and let $I = J(f_1,\dots,f_p)$ be a complete intersection ideal of codimension $p$
    contained in $J$.
    If $K := I : J$, then $J = I : K$.
\end{thm}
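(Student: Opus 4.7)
The plan is to reduce the identity to a double-annihilator statement in a zero-dimensional Gorenstein local ring, where it follows from classical Matlis duality, and to perform this reduction by localizing at the minimal primes of $J$.

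First I would dispose of the easy inclusion $J \subseteq I : K$, which is immediate from $K J \subseteq I$. For the reverse inclusion $I : K \subseteq J$, the hypothesis that $J$ has pure codimension $p$ means its primary decomposition $J = \bigcap_i \mathfrak{q}_i$ involves only primaries to minimal primes $\mathfrak{p}_i$ of codimension $p$, and so an element of $\Ok$ lies in $J$ if and only if it lies in $J_{\mathfrak{p}_i}$ for every $i$. Using that colon ideals of finitely generated ideals commute with localization, the task reduces to showing, for each minimal prime $\mathfrak{p}$ of $J$, the equality $I_{\mathfrak{p}} : (I_{\mathfrak{p}} : J_{\mathfrak{p}}) = J_{\mathfrak{p}}$ inside $\Ok_{\mathfrak{p}}$.

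Next I would pass to the quotient $\bar{R} := \Ok_{\mathfrak{p}}/I_{\mathfrak{p}}$. Since $\Ok_{\mathfrak{p}}$ is a regular local ring of dimension $p$ and $f_1,\dots,f_p$ remain a regular sequence there (as $\mathfrak{p}$ has codimension $p$ and contains $I$), the ring $\bar{R}$ is an Artinian local Gorenstein ring. Writing $\bar{J}$ for the image of $J_{\mathfrak{p}}$ in $\bar{R}$, the identity to be proved becomes the double-annihilator statement $(0 :_{\bar{R}} (0 :_{\bar{R}} \bar{J})) = \bar{J}$.

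The main obstacle is this final double-annihilator identity, but it is a well-known consequence of Matlis duality. In an Artinian local Gorenstein ring $\bar{R}$, the functor $\Hom_{\bar{R}}(-, \bar{R})$ is exact and self-dual on finitely generated modules, and the natural identification $\Hom_{\bar{R}}(\bar{R}/\mathfrak{a}, \bar{R}) = 0 :_{\bar{R}} \mathfrak{a}$ turns the biduality isomorphism applied to $\bar R/\bar J$ into the desired equality. Equivalently, one can invoke the identity $\ell(0 :_{\bar{R}} \mathfrak{a}) = \ell(\bar{R}/\mathfrak{a})$, valid in any Artinian Gorenstein local ring, which combined with the obvious inclusion $\bar{J} \subseteq (0 :_{\bar{R}} (0 :_{\bar{R}} \bar{J}))$ forces equality by a length count.
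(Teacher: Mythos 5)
Your proof is correct. Note that the paper itself does not prove this statement at all; it simply cites (the proof of) Proposition~3.41 in Vasconcelos, so you have supplied an argument where the paper outsources one. Your route is the standard one underlying that citation, organized locally: the easy inclusion $J \subseteq I : K$ is immediate, purity of $J$ reduces the reverse inclusion to the localizations at the (all minimal) associated primes $\mathfrak{p}$ of $J$, the colon ideals commute with localization because everything is finitely generated, and since $\mathfrak{p}$ is then also minimal over $I$, the quotient $\Ok_{\mathfrak{p}}/I_{\mathfrak{p}}$ is an Artinian complete intersection, hence Gorenstein, where the double-annihilator identity $0:(0:\bar J)=\bar J$ follows from Matlis duality or the length count $\ell(0:\mathfrak{a})=\ell(\bar R/\mathfrak{a})$. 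Vasconcelos's proof is the global version of the same idea, phrased via the canonical module $\omega_{R/I}\cong\Hom(R/I,R/\mathfrak{f})$ and the self-duality of the Gorenstein ring $R/\mathfrak{f}$; your localization to the zero-dimensional case buys a more elementary and self-contained argument at the cost of invoking primary decomposition, while the global argument avoids localizing but requires the machinery of canonical modules. All the individual steps you use (purity giving $g\in J$ iff $g\in J_{\mathfrak{p}_i}$ for all $i$, the system of parameters $f_1,\dots,f_p$ being a regular sequence in the Cohen--Macaulay local ring $\Ok_{\mathfrak{p}}$, and the Gorenstein double-annihilator property) are sound, so I see no gap.
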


We can describe the ideal $K$ appearing in Theorem~\ref{thm:colon} in a different way,
when $\Ok/J$ is Cohen-Macaulay. In order to do this, we use the following standard fact
from homological algebra, see \cite{Eis}, Proposition~A3.13.

\begin{prop} \label{prop:comparison-morphism}
    Let $\alpha : F \to G$ be a homomorphism of $\Ok$-modules, and let $(K,\psi)$ and
    $(E,\varphi)$ be free resolutions of $F$ and $G$.
    Then, there exists a morphism $a : (K,\psi) \to (E,\varphi)$ of complexes which extends $\alpha$.
\end{prop}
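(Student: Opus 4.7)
The plan is a standard diagram chase, building $a_k : K_k \to E_k$ inductively on $k$. The two ingredients that make the whole thing go are (i) every $K_k$ is free, hence projective, so any morphism out of $K_k$ into a quotient module lifts along any surjection, and (ii) the complex $(E,\varphi)$ is exact in positive degrees with $E_0 / \Im \varphi_1 \cong G$, so at each stage the targeted ``cycle'' module coincides with the image of the next differential. Let $\epsilon_K : K_0 \to F$ and $\epsilon_E : E_0 \to G$ denote the augmentations of the two resolutions.

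First I would handle the base case. Since $K_0$ is projective and $\epsilon_E$ is surjective, the composite $\alpha \circ \epsilon_K : K_0 \to G$ lifts to a morphism $a_0 : K_0 \to E_0$ with $\epsilon_E \circ a_0 = \alpha \circ \epsilon_K$. This gives commutativity of the rightmost square in the extended diagram.

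Next, the inductive step. Assume $a_0,\ldots,a_{k-1}$ have been constructed so that $\varphi_j \circ a_j = a_{j-1} \circ \psi_j$ for $1 \le j \le k-1$ (with the convention for $j=1$ that $\varphi_1 \circ a_1 = a_0 \circ \psi_1$, the case just treated). Consider the map $a_{k-1} \circ \psi_k : K_k \to E_{k-1}$. For $k \geq 2$ we compute
\begin{equation*}
  \varphi_{k-1} \circ (a_{k-1} \circ \psi_k) = (\varphi_{k-1} \circ a_{k-1}) \circ \psi_k = a_{k-2} \circ \psi_{k-1} \circ \psi_k = 0,
\end{equation*}
so the image of $a_{k-1} \circ \psi_k$ lies in $\Ker \varphi_{k-1} = \Im \varphi_k$ by exactness; for $k=1$ the analogous identity $\epsilon_E \circ a_0 \circ \psi_1 = \alpha \circ \epsilon_K \circ \psi_1 = 0$ places the image in $\Ker \epsilon_E = \Im \varphi_1$. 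In either case, projectivity of $K_k$ together with the surjection $\varphi_k : E_k \twoheadrightarrow \Im \varphi_k$ yields a lift $a_k : K_k \to E_k$ satisfying $\varphi_k \circ a_k = a_{k-1} \circ \psi_k$, completing the induction.

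There is essentially no genuine obstacle here; the only point of care is aligning the base case ($k=0,1$) with the inductive formula, since the augmentation $\epsilon_E$ plays the role of $\varphi_0$. This is why the paper simply cites \cite{Eis}*{Proposition~A3.13} rather than reproducing the argument.
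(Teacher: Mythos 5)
Your proof is correct: it is the standard comparison-theorem argument (inductive lifting using projectivity of the free modules $K_k$ and exactness of $(E,\varphi)$), which is precisely the proof of the result the paper cites from Eisenbud, Proposition~A3.13, rather than reproving. The base case and the $k=1$ versus $k\ge 2$ alignment are handled properly, so there is nothing to add.
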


We will apply this in the case when $F = \Ok/I$, $G = \Ok/J$, $I \subseteq J$
and $\alpha$ is the natural surjection $\pi : \Ok/I \to \Ok/J$, as in \eqref{eq:amorphism}.
We remind for the following lemma, that for any ideal $J \subseteq \Ok$ of codimension $p$,
there always exists a complete intersection ideal $I \subseteq J$ of codimension $p$.
The following follows from Lemma~3.2 in \cite{FH}.

\begin{lma} \label{lma:colon}
    Let $J \subseteq \Ok$ be a Cohen-Macaulay ideal of codimension $p$,
    and assume that $I = J(f_1,\dots,f_p) \subseteq J$ is a complete intersection ideal of codimension $p$.
    Let $(E,\varphi)$ be a free resolution of $\Ok/J$ such that $\rank E_0 = 1$, and let $(K,\psi)$ be the Koszul complex of $f$,
    which is a free resolution of $\Ok/I$. Let $a : (K,\psi) \to (E,\varphi)$ be the morphism induced
    by the natural surjection $\Ok/I \to \Ok/J$ as in Proposition~\ref{prop:comparison-morphism}.
    Let $L$ be the ideal generated by the entries of $a_p$.
    Then,
    \begin{equation*}
        I : J = I + L.
    \end{equation*}
\end{lma}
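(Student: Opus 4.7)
The plan is to dualize both free resolutions and read off the image of the induced map on top cohomology. First, since $J$ is Cohen-Macaulay of codimension $p$ and $(E,\varphi)$ has length $p$, applying $\Hom(-,\Ok)$ gives a complex whose cohomology is concentrated in degree $p$ and equals the canonical module $\omega_{\Ok/J} := \operatorname{Ext}^p_\Ok(\Ok/J,\Ok) = \operatorname{coker}(\varphi_p^* : E_{p-1}^* \to E_p^*)$. The complete intersection $\Ok/I$ is Gorenstein, and because $K_p \cong \Ok$ has rank one, dualizing the Koszul complex identifies $\omega_{\Ok/I} = K_p^*/\psi_p^*(K_{p-1}^*)$ with $\Ok/I$.

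Next, I would dualize the comparison morphism to obtain $a^* : (E^*,\varphi^*) \to (K^*,\psi^*)$, inducing on the top cohomology a map
\[
\alpha := a_p^* : \omega_{\Ok/J} \longrightarrow \Ok/I,
\]
which is by construction $\operatorname{Ext}^p(-,\Ok)$ applied to $\pi : \Ok/I \to \Ok/J$. The key homological input is the identification
\[
\omega_{\Ok/J} \;\cong\; \Hom_{\Ok/I}(\Ok/J, \omega_{\Ok/I}) \;\cong\; \Hom_{\Ok/I}(\Ok/J, \Ok/I) \;\cong\; (I:J)/I,
\]
which comes from a degenerate change-of-rings spectral sequence (using that $\Ok/I$ has projective dimension $p$ over $\Ok$ and that $\omega_{\Ok/I} \cong \Ok/I$), under which $\alpha$ becomes the tautological inclusion $(I:J)/I \hookrightarrow \Ok/I$. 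In particular $\Im \alpha = (I:J)/I$.

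On the other hand, I would compute $\Im \alpha$ directly from the matrix of $a_p$. Picking a generator of $K_p$ and a basis of $E_p$, the map $a_p$ is given by a column vector whose entries generate $L$ by definition. Under the identification $K_p^* \cong \Ok$, the dual $a_p^* : E_p^* \to \Ok$ sends each dual basis element of $E_p^*$ to the corresponding entry, so its image in $\Ok$ equals $L$, and its image in $\Ok/I$ equals $(L+I)/I$. Equating the two descriptions of $\Im \alpha$ and noting that both $I+L$ and $I:J$ contain $I$ yields $I:J = I+L$.

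The main obstacle is the identification $\omega_{\Ok/J} \cong (I:J)/I$ together with the matching of $\alpha$ with the natural inclusion; this is the nontrivial homological input (essentially the content of \cite{FH}*{Lemma 3.2}) and requires either a spectral sequence argument or a careful direct verification using the Gorenstein property of $\Ok/I$. Everything else amounts to computing cokernels of dualized complexes and reading entries off a matrix. The hypothesis $\rank E_0 = 1$ is used only to ensure that the dualized complex has the expected form so that $\omega_{\Ok/J}$ is cleanly identified with $\operatorname{coker}(\varphi_p^*)$; the heart of the computation takes place at the top of the complex.
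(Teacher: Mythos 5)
Your argument is correct. The paper gives no proof of this lemma---it defers entirely to \cite{FH}*{Lemma~3.2}---and your dualization argument, computing the image of $\operatorname{Ext}^p_\Ok(\pi,\Ok)$ once via the Rees isomorphism $\operatorname{Ext}^p_\Ok(\Ok/J,\Ok)\cong\Hom_{\Ok/I}(\Ok/J,\Ok/I)\cong(I:J)/I$ and once via the dual of the comparison morphism (whose image in $\operatorname{coker}\psi_p^*\cong\Ok/I$ is $(I+L)/I$), is precisely the standard proof underlying that reference and the variant \cite{LExpl}*{Lemma~4.6} mentioned in the remark after the lemma.
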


\begin{remark}
    By reformulating this result, one can in fact drop the Cohen-Macaulay assumption, see
    \cite{LExpl}*{Lemma~4.6}, but for simplicity, we stick to this case here.
\end{remark}

    In \cite{DGSY}, a topic is treated which is related to this article,
    namely, given an analytic functional annihilated by some Cohen-Macaulay ideal, to express
    this functional in terms of residue currents, or more precisely Coleff-Herrera products.
    In order to do this, Lemma~\ref{lma:colon} plays an important role, see the proof of \cite{DGSY}*{Theorem~4.1}.
    In this article, we construct currents with a prescribed Cohen-Macaulay ideal $J$ as its
    annihilator. 
    From the currents we construct, one could construct an analytic functional annihilated by $J$
    and by our construction, this functional could be directly expressed in terms of
    a current $\dbar X_p$ whose annihilator is some complete intersection ideal contained in $J$. 
    The current $\dbar X_p$ could either be the current constructed in \cite{MazJMAA}
    or a Coleff-Herrera product, and in this latter case, one would obtain an expression 
    for the functional like in \cite{DGSY}.

\begin{proof}[Proof of Theorem~\ref{thm:main}]
    Since $J = I : (I:J)$ by Theorem~\ref{thm:colon}, and $I : J = I + L$ by
    Lemma~\ref{lma:colon}, $J = I : L$. We thus get that
    $g \in J$ if and only if all the entries of $g a_p(e)$ are in $I$.
    By \eqref{eq:ci-duality}, this holds if and only if $g a_p(e) \dbar X_p = 0$.
\end{proof}

\begin{ex}
    We consider now the most basic case, namely when $I = J(f_1,\dots,f_p)$ and $J = J(g_1,\dots,g_p)$
    are both complete intersection ideals of codimension $p$. Then $I \subseteq J$ is equivalent
    to that that $f = gA$ for some holomorphic $p \times p$-matrix $A$.
    In this case, when $(E,\varphi)$ and $(K,\psi)$ are the Koszul complexes of $g$ and $f$
    respectively, being free resolutions of $\Ok/J$ and $\Ok/I$ respectively, then the morphism
    $a : (E,\varphi) \to (K,\psi)$ extending the natural surjection $\pi : \Ok/I \to \Ok/J$
    is given by $a_k : \bigwedge^k \Ok^{\oplus p} \to \bigwedge^k \Ok^{\oplus p}$, $a_k = \bigwedge^k A$.
    In particular, $a_p = \det A$.
    Thus, reasoning as above, we get that
    \begin{equation} \label{eq:ci-trans-duality}
        g \in J \text{ if and only if }  (\det A) g \in I.
    \end{equation}
    This was an important part of the construction in \cite{MazJMAA},
    since \eqref{eq:ci-trans-duality} allowed to reduce the problem to constructing
    such currents for just for certain special ``adapted'' complete intersections.
\end{ex}

\begin{ex} \label{ex:cmcurve}
    Let $\pi : \C \to \C^3$, $\pi(t) = (t^3,t^4,t^5)$, and let $Z$ be the germ at $0$ of $\pi(\C)$.
    One can show that the ideal of holomorphic functions vanishing at $Z$
    equals $J = J(y^2-xz,x^3-yz,x^2y-z^2)$.
    The module $\Ok/J$ has a minimal free resolution $(E,\varphi)$ of the form
    \begin{equation*}
        0 \to \Ok^{\oplus 2} \xrightarrow[]{\varphi_2} \Ok^{\oplus 3} \xrightarrow[]{\varphi_1} \Ok \to \Ok/J,
    \end{equation*}
    where
    \begin{equation*}
        \varphi_2 = \left[ \begin{array}{cc} -z & -x^2 \\ -y & -z \\ x & y \end{array} \right]
            \text{ and }
            \varphi_1 = \left[ \begin{array}{ccc} y^2-xz & x^3-yz & x^2y-z^2 \end{array} \right].
    \end{equation*}

    In particular, since $\Ok/J$ has a minimal free resolution of length $2$,
    with $\rank E_2 = 2$, $\Ok/J$ is Cohen-Macaulay but $J$ is not a complete intersection.
    However, $Z$ is in fact a set-theoretic complete intersection, which one can
    see by verifying that indeed, if $f = (z^2-x^2y,x^4+y^3-2xyz)$, and $I = J(f)$,
    then $Z(I) = Z$.

    Let $(F,\psi)$ be the Koszul complex of $f$, which is a free resolution of $\Ok/I$ since $f$ is a complete intersection.
    One verifies that $a : (F,\psi) \to (E,\varphi)$ given by,
    \begin{equation*}
        a_2 = \left[ \begin{array}{c} x^3-yz  \\ y^2-xz \end{array} \right] \text{, }
        a_1 = \left[ \begin{array}{cc} 0 & y \\ 0 & x \\  -1 & 0 \end{array} \right] \text{ and }
        a_0 = \left[ \begin{array}{c} 1 \end{array} \right],
    \end{equation*}
    is a morphism of complexes extending the natural surjection $\pi : \Ok/I \to \Ok/J$.
    In the appendix of \cite{LarComp}, we give an example of how such a morphism can be computed
    with the help of the computer algebra system Macaulay2.

    By Theorem~\ref{thm:main}, we then get that
    \begin{align*}
        g \in J \text{ if and only if }
        (x^3-yz) g \in I \text{ and } (y^2-xz) g \in I.
    \end{align*}
\end{ex}

    For general Cohen-Macaulay ideals, one cannot expect that $Z(I) = Z(J)$ as in this example,
    since it by definition only is possible for set-theoretic complete intersections.

\subsection{Construction of the currents from \cite{MazCR} and \cite{MazJMAA}}

    In order to calculate the currents satisfying \eqref{eq:ci-conditions} as constructed in
    \cite{MazCR} and \cite{MazJMAA} for the complete intersection in the example above,
    we will first recall briefly the construction in general for a complete intersection ideal
    of codimension $2$. (The case of codimension $> 2$ is similar, but a bit more technically involved.)
    
    We thus consider a tuple $(f_1,f_2)$ of germs of holomorphic functions in $\C^n$ defining
    a complete intersection of codimension $2$.
    By a linear change of coordinates, we can assume we have coordinates $z$ on $\C^n$ such that
    $f_1$ and $f_2$ are of the
    form $f_i = v_i Q_i$, where $v_i$ are invertible, and $Q_i$ are Weierstrass polynomials in 
    $z_1$ for $i=1,2$. Then, the resultant $r_2$ of $Q_1$ and $Q_2$ (where $Q_1$ and $Q_2$ are considered as polynomials in $z_1$
    for the calculation of the resultant) is a holomorphic function independent
    of $z_1$. By a linear change of variables only in $(z_2,\dots,z_n)$, we can assume that $r_2(z) = u_2(z) P_2(z_2,\dots,z_n)$,
    where $P_2$ is a Weierstrass polynomial in $z_2$ independent of $z_1$ and $u_2$ is invertible.

    If we let $g_1 := f_1$ and $g_2 := r_2$, then $(g_1,g_2)$ is a complete intersection which
    satisfies that one can write $g_i(z) = u_i(z) P_i(z)$, where $u_i(z)$ is a unit,
    and $P_i(z)$ is a Weierstrass polynomials in $z_i$ of degree $N_i$, and in addition, $P_2(z)$ is independent of $z_1$.
    The construction of the currents $X_1$ and $X_2$ satisfying \eqref{eq:ci-conditions} is based on first constructing currents $Y_1,Y_2$ satisfying 
    the corresponding conditions for $(g_1,g_2)$, i.e.,
    $$ g_1 Y_1 = 1, g_1 Y_2 = 0 \text{ and } g_2 Y_2 = \dbar Y_1.$$
    To do this, one defines $Y_1$ by
    \begin{equation} \label{eq:Y1}
        \la Y_1,\phi \wedge dz_I \wedge d\bar{z}_J \ra := C_1 \int \frac{\overline{P}_1^{\gamma}}{g_1}
         \partial_{\bar{z}_1}^{N_1 \gamma}(\phi) dz_I \wedge d\bar{z}_J,
    \end{equation}
    and $Y_2$ by
    \begin{equation} \label{eq:Y2}
        \la Y_2,\phi \wedge dz_I \wedge d\bar{z}_J \ra := C_2 \int \frac{\overline{P}_2}{g_2} \partial_{\overline{z}_2}^{N_2}
        \left( \frac{\overline{\partial P_1^\gamma}}{g_1} \partial_{\overline{z}_1}^{N_1 \gamma}(\phi) \right) dz_I \wedge dz_J,
    \end{equation}
    where $\gamma$ is an integer chosen so the integrand in the definition of $Y_2$ becomes integrable, which indeed holds for $\gamma$ larger than $N_2$.
    The constant $C_1$ is chosen so that $f_1 X_1 = 1$ and the constant $C_2$ is then chosen such that $g_2 Y_2 = \dbar Y_1$.
    Through integration by parts one can calculate that $C_1 = (-1)^{N_1\gamma} (N_1\gamma)!$ and
    $C_2 = -(-1)^{N_2} C_1/(N_2!)$.

    In order to construct the currents $X_1,X_2$ for $(f_1,f_2)$, one then
    first writes $r_2 = a f_1 + b f_2$ for some holomorphic functions $a,b$,
    which indeed is possible, since by construction, $r_2 = a' Q_1 + b' Q_2$.
    Then, one defines $X_1 := Y_1$ and $X_2 := b Y_2$, which one can verify
    satisfies the properties \eqref{eq:ci-conditions}.

    Note that the greatest common divisor, i.e., the last remainder term in the Euclidean algorithm
    for $Q_1$ and $Q_2$ considered as polynomials in $z_1$ gives the resultant $r_2$ up to a constant
    which depends on the degrees of the polynomials appearing when running the algorithm, 
    see for example \cite{CLO}*{Exercise 3.6.10-11}.
    For our purposes it does not matter if we take a constant multiple of $r_2$ as $g_2$,
    and we will thus below take the greatest common divisor instead of the resultant.
    This is advantageous since the Euclidean algorithm is convenient for computation,
    and additionally, by going backwards in the algorithm,
    with the help of the terms that appear, one obtains a decomposition $r_2 = a' Q_1 + b' Q_2$.

\begin{ex} \label{ex:currents-cmcurve}
    We now consider the construction as described above for $f= (f_1,f_2) := (z^2-x^2y,x^4-2xyz+y^3)$, as in Expample~\ref{ex:cmcurve}.
    In order to make $f_1$ and $f_2$ Weierstrass polynomials in $x$ around zero (times units),
    we do the change of coordinates:
    $$x=X,\ \ y=Y,\ \ z=X+Z.$$
    In these new coordinates, $(f_1,f_2)$ becomes:
    $$f = ((1-Y)X^2+2XZ+Z^2,X^4-2X^2Y-2XYZ+Y^3 ).$$
    If we let $g=1-Y$, the Weierstrass polynomial in $X$ associated to $f_1$ is equal to $P_1:=X^2+{2XZ\over g}+{Z^2\over g}$, which
    has degree $N_1 = 2$.

    To calculate the resultant $r_2$ of $P_1$ and $f_2$, we use the Euclidean algorithm as mentioned above,
    and after an elementary but tedious calculations, we obtain that
    $$r_2 ={Z^2F^2\over g}-{2ZFG\over g}+G^2,$$
    where $F$ and $G$ are Weierstrass polynomials with respect to $Z$ defined by:
    $$F(Y,Z)={4\over g^2}(1-{2\over g})Z^3+2Y({2\over g}-1)Z,$$
    $$G(Y,Z)={1\over g^2}(1-{4\over g})Z^4+{2Y\over g}Z^2+Y^3.$$
    Since $g(0)=1$, it is easy to see that $r_2 =UP_2$ where $U$ is a unit near zero and $P_2$
    is a Weierstrass polynomial in $Z$ of degree $N_2 = 8$. 
    We finally do the linear coordinate change $(z_1,z_2,z_3) = (X,Z,Y)$ keeping the first variable fixed
    so that $P_1$ is a Weierstrass polynomial in $z_1$ and $P_2$ is a Weierstrass polynomial in
    $z_2$, and is independent of $z_1$.

    We thus let $(g_1,g_2) = (f_1,r_2)$ and define $Y_1,Y_2$ by \eqref{eq:Y1} and \eqref{eq:Y2}.
    By going backwards in the Euclidean algorithm, one finds that $r_2=af_1+bf_2$, where 
    $b=-(x+{2z\over g})F+G$. Thus, we get that
    \begin{equation*}
        X_1 := Y_1 \text{ and } X_2 := b Y_2
    \end{equation*}
    satisfies all the conditions \eqref{eq:ci-conditions}.
\end{ex}

\section{Proof by explicit division formulas} \label{sect:integral}

In this section, we give an explicit division formula which proves
Theorem~\ref{thm:main}.
The proof relies on the following two lemmas.
On $\C^n$ with coordinates $\zeta$, and for $z \in \C^n$ fixed,
$\delta_\eta$ denotes contraction with the vector field
$(\zeta_1 - z_1)\frac{\partial}{\partial \zeta_1} + \dots + (\zeta_n - z_n)\frac{\partial}{\partial \zeta_n}$.

\begin{lma} \label{lma:detaQH}
    Let $Q$ be a $(1,0)$-form on $\C^n$, and $H$ a holomorphic $(k+1)$-form.
    Then
    \begin{equation*}
        (n-k) \dbar (\delta_\eta Q) \wedge (\dbar Q)^{n-k-1} \wedge H
        = (\dbar Q)^{n-k} \wedge \delta_\eta H.
    \end{equation*}
\end{lma}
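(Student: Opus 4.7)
The plan is to obtain the identity by applying the contraction $\delta_\eta$ to a form which vanishes for trivial dimensional reasons, and then unpacking it using the Leibniz rule together with the interplay between $\delta_\eta$ and $\dbar$.

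First, I would observe that $(\dbar Q)^{n-k} \wedge H$ has holomorphic degree $(n-k) + (k+1) = n+1$, which exceeds the complex dimension of $\C^n$. Hence this form is identically zero, so $\delta_\eta\bigl((\dbar Q)^{n-k} \wedge H\bigr) = 0$ as well. This vanishing is the entire source of the claimed identity.

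Next, I would expand this vanishing expression using that $\delta_\eta$ is an antiderivation of degree $-1$. Since $(\dbar Q)^{n-k}$ has even total degree $2(n-k)$, the Leibniz rule yields
\[
    0 = \delta_\eta\bigl((\dbar Q)^{n-k}\bigr) \wedge H + (\dbar Q)^{n-k} \wedge \delta_\eta H.
\]
Applying the Leibniz rule once more, and using that $\delta_\eta(\dbar Q)$ is a $(0,1)$-form (odd) while $\dbar Q$ is a $(1,1)$-form (even), the intermediate signs collapse and one gets
\[
    \delta_\eta\bigl((\dbar Q)^{n-k}\bigr) = (n-k)\, \delta_\eta(\dbar Q) \wedge (\dbar Q)^{n-k-1}.
\]

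Finally, since $\eta = \sum_i (\zeta_i - z_i)\partial/\partial \zeta_i$ is a holomorphic vector field (its coefficients are $\dbar$-closed), a direct check on monomials shows $\dbar \delta_\eta + \delta_\eta \dbar = 0$, so $\delta_\eta(\dbar Q) = -\dbar(\delta_\eta Q)$. Substituting this back produces the asserted equality. The argument is essentially a formal calculation; there is no real obstacle beyond careful bookkeeping with the Koszul signs in the Leibniz rule and the anticommutation $\dbar \delta_\eta + \delta_\eta \dbar = 0$, which is the standard consequence of $\eta$ being holomorphic.
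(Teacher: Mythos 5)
Your proof is correct and rests on the same two ingredients as the paper's: the vanishing, for degree reasons, of a form whose holomorphic degree exceeds $n$, and the anticommutation $\delta_\eta \dbar = -\dbar \delta_\eta$ coming from $\eta$ being a holomorphic $(1,0)$ vector field. The only (cosmetic) difference is that you contract the vanishing form $(\dbar Q)^{n-k} \wedge H$ directly, whereas the paper applies $\delta_\eta \dbar$ to $Q \wedge (\dbar Q)^{n-k-1} \wedge H$; since $\dbar$ of the latter equals the former, the two computations coincide, and yours is if anything slightly more streamlined.
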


\begin{lma} \label{lma:nablaY}
    Let $I = J(f_1,\dots,f_p)$, $J$, $(E,\varphi)$, $(K,\psi)$ and $a : (K,\psi) \to (E,\varphi)$ be
    as in Theorem~\ref{thm:main}, and let $X_k$ be $(0,k-1)$-currents for $k=1,\dots,p$, which satisfy \eqref{eq:ci-conditions}.
    Let $Y_k$ be defined as
\begin{equation*}
    Y_k := a_k ( e_1 \wedge \dots \wedge e_k \wedge X_k ).
\end{equation*}
    Then $Y$ satisfies
\begin{equation} \label{eq:nablaY}
    \nabla Y := 1 - \dbar Y_p,
\end{equation}
where $Y = Y_1 + \dots + Y_p$ and $\nabla = \varphi-\dbar$.
\end{lma}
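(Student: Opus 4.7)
The plan is to compute $\nabla Y_k = \varphi Y_k - \dbar Y_k$ for each $k$ separately and then observe that summing over $k$ produces a telescoping cancellation. Since $a_k(e_1 \wedge \dots \wedge e_k)$ is a tuple of holomorphic functions, $\dbar$ passes through it, yielding $\dbar Y_k = a_k(e_1 \wedge \dots \wedge e_k)\,\dbar X_k$. For the term $\varphi Y_k$, the hypothesis that $a$ is a morphism of complexes gives $\varphi_k \circ a_k = a_{k-1} \circ \psi_k$, and expanding the Koszul differential,
\[
\psi_k(e_1 \wedge \dots \wedge e_k) = \sum_{j=1}^k (-1)^{j-1} f_j \, e_1 \wedge \dots \widehat{e_j} \dots \wedge e_k.
\]
The relations $f_j X_k = 0$ for $j < k$ from \eqref{eq:ci-conditions} annihilate every term of this sum except the $j = k$ one, leaving $\varphi Y_k = \pm\, a_{k-1}(e_1 \wedge \dots \wedge e_{k-1}) f_k X_k$.

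Now the remaining identities in \eqref{eq:ci-conditions} convert this expression into a multiple of $\dbar Y_{k-1}$. For $k \geq 2$, using $f_k X_k = \dbar X_{k-1}$, I obtain $\varphi Y_k = \pm\, \dbar Y_{k-1}$. For $k = 1$, I normalize the morphism so that $a_0 : K_0 \to E_0$ is the identity (possible since both modules are free of rank one and $a_0$ need only lift the natural quotient $\Ok/I \to \Ok/J$); then $\varphi Y_1 = f_1 a_0(1) X_1 = 1$ by $f_1 X_1 = 1$. Summing produces
\[
\nabla Y = 1 + \sum_{k=2}^p \bigl(\pm\, \dbar Y_{k-1}\bigr) - \sum_{k=1}^p \dbar Y_k,
\]
and after reindexing the middle sum, each $\dbar Y_{k-1}$ contribution cancels the $\dbar Y_k$ contribution for $1 \leq k \leq p-1$, leaving precisely $\nabla Y = 1 - \dbar Y_p$.

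The sole delicate point is the bookkeeping of signs. The Koszul sign convention must be reconciled with the superconnection convention for $\nabla = \varphi - \dbar$, in which $\varphi$ is taken to pick up a factor $(-1)^\ell$ when acting on $(0,\ell)$-valued currents so that $\nabla^2 = 0$. One must verify that, with these conventions, each $\pm$ above is in fact $+1$, so that the telescoping closes exactly. Once the signs are checked on a single $Y_k$, the rest of the computation is mechanical.
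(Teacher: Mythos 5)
Your proposal follows essentially the same route as the paper's proof: use the morphism identity $\varphi_k a_k = a_{k-1}\psi_k$, expand the Koszul differential, kill every term except $j=k$ via $f_j X_k = 0$ for $j<k$, and convert $f_k X_k = \dbar X_{k-1}$ into $\dbar Y_{k-1}$ (with $a_0$ handling the identification $E_0\cong\Ok$ as in Remark~\ref{rem:identification}). The sign check you defer is closed in the paper by observing that the Koszul sign $(-1)^{k-1}$ is exactly absorbed by the superstructure when $\dbar$ is commuted past $a_{k-1}(e_1\wedge\dots\wedge e_{k-1})$, which has degree $k-1$, so that $(-1)^{k-1}a_{k-1}(e_1\wedge\dots\wedge e_{k-1})\wedge\dbar X_{k-1}=\dbar Y_{k-1}$.
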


More explicitly, the equation \eqref{eq:nablaY} means that
\begin{equation} \label{eq:nablaconditions}
    \varphi_1 Y_1 = 1 \text{ and } \varphi_k Y_k =
    \dbar Y_{k-1} \text{ for $2\leq k \leq p$ }.
\end{equation}
We let $e_1,\dots,e_p$ be the standard basis of $K_1 \cong \Ok^p$ such that
the morphism in $(K,\psi)$ is contraction with $\sum f_i e_i^*$,
and in particular, $K_k$ has as a basis $e_{I_1} \wedge \dots \wedge e_{I_k}$
for $1 \leq I_1 < \dots < I_k \leq p$.

\begin{remark} \label{rem:identification}
To be precise, $\varphi_1 Y_1$ is a $E_0$-valued $(0,0)$-current.
However, since we assume that $\rank E_0 = 1$, we have that
$E_0 \cong \Ok \cong K_0$. Note that $K_0 \cong \bigwedge^0 K_1$ has
a canonical frame, $e_\emptyset$. In addition, $a_0 : K_0 \to E_0$ is an isomorphism,
so $a_0$ induces a frame $a_0(e_\emptyset)$ of $E_0$. In order
to simplify the notation, we have identified $\Ok \stackrel{\cong}{\to} E_0$ through
the map, $f \mapsto f a_0(e_\emptyset)$, so that we write $\varphi_1 Y_1 = 1$
instead of $\varphi_1 Y_1 = a_0(e_\emptyset)$.
\end{remark}

\medskip

In order to prove the division formula, we will also use the so-called \emph{generalized Hefer forms}
associated to a free resolution $(E,\varphi)$ as introduced by Andersson in \cite{AndIntII}. They consist of
$(k-\ell,0)$-form valued holomorphic morphisms $H^\ell_k : E_k \to E_\ell$, satisfying $H^\ell_k = 0$
if $k < \ell$, $H^\ell_\ell = I_{E_\ell}$ and
\begin{equation} \label{eq:Hefer}
    \delta_\eta H^\ell_{k+1} = H^\ell_k \varphi_{k+1}(\zeta) - \varphi_{\ell+1}(z) H^{\ell+1}_{k+1}
\end{equation}
for $k > \ell$.

In a similar way to in \cite{MazCR} and \cite{MazJMAA}, we will show that
\eqref{eq:nablaY} and the following integral representation formula
lead to our sought after division formulas. Although similar formulas
exist also when $D$ is strongly pseudoconvex, \cites{BA,DH}, for simplicity
of the presentation, we stick to the case when $D$ is convex,
see for example \cite{BWeighted}*{Chapter~4}.

\begin{thm} \label{thm:w-div-form}
    Let $D \subseteq \C^n$ be a smooth convex domain with defining function $\rho$,
    and let $Q := \partial \log (1/(-\rho))$ and
    \begin{equation*}
        P^{N,\ell}(\zeta,z) := \frac{1}{(\delta_\eta Q + 1)^{N+\ell}} (\dbar Q)^{\ell},
    \end{equation*}
    which is holomorphic in $z \in D$, and for $N \gg 1$, it is smooth in $\zeta \in \overline{D}$
    and vanishes to arbitrarily high order (depending on $N$) on $\partial D$.
    If $h \in \Ok(\overline{D})$, then
    \begin{equation} \label{eq:w-div-form}
        h(z) = c_{N,n} \int_D P^{N,n}(\zeta,z) h(\zeta) \text{, \quad for $z \in D$,}
    \end{equation}
    where $c_{N,n} := { N+n-1 \choose n}$.
\end{thm}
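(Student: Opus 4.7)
The plan is to derive \eqref{eq:w-div-form} as a weighted Bochner--Martinelli--Koppelman representation on the convex domain $D$, following the framework of weighted integral formulas in \cite{BWeighted}*{Chapter~4}. Set $B := 1/(1+\delta_\eta Q)$, so that $P^{N,\ell} = B^{N+\ell}(\dbar Q)^\ell$. I would treat $B^N$ as a scalar weight: it is holomorphic in $z$, satisfies $B^N|_{\zeta=z}=1$ (since $\delta_\eta Q=0$ there), and, by convexity of $\rho$, vanishes to order $N$ on $\partial D$.

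The first step is the boundary estimate. Convexity of $\rho$ gives $2\,\mathrm{Re}\,\langle\partial\rho(\zeta),\zeta-z\rangle \geq \rho(\zeta)-\rho(z)$ for $\zeta,z\in\overline D$, so $|\langle\partial\rho,\zeta-z\rangle + \rho(\zeta)| \geq \tfrac12|\rho(z)|>0$ uniformly on $\overline D$ for fixed $z\in D$. Since $1+\delta_\eta Q = (\langle\partial\rho,\zeta-z\rangle+\rho)/\rho$, the factor $B$ vanishes like $|\rho|$ on $\partial D$, so $P^{N,n}$ is smooth up to $\overline D$ and vanishes on $\partial D$ to order at least $N$. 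The second step is the Koppelman construction: introduce the section $s := BQ$, which vanishes on $\partial D$ and satisfies $\delta_\eta s = 1-B$ and $\dbar s = B\dbar Q + B^2(\delta_\eta\dbar Q)\wedge Q$ (using $\dbar B = B^2\,\delta_\eta\dbar Q$), and assemble the kernel
\begin{equation*}
    k(\zeta,z) = B^N + \sum_{\ell=1}^{n} c_\ell\, B^N\, s\wedge(\dbar s)^{\ell-1},
\end{equation*}
whose $\zeta$-top-degree $(n,n)$-part equals $c_{N,n} P^{N,n}$ and which is $d_\zeta$-closed on $D\setminus\{z\}$. Applying Stokes' theorem on $D_\epsilon := D\setminus\overline{B(z,\epsilon)}$ to $k(\zeta,z)h(\zeta)$, the integral over $\partial D$ vanishes by the boundary estimate, while the integral over $\partial B(z,\epsilon)$ converges as $\epsilon\to 0$ to $h(z)$: near $\zeta=z$ one has $B\to 1$ and $s$ is asymptotically the Bochner--Martinelli section $\partial\log|\zeta-z|^2$, so $s\wedge(\dbar s)^{n-1}$ reproduces $h(z)$ with the standard normalization.

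The principal obstacle is the combinatorial identification of the constant $c_{N,n} = \binom{N+n-1}{n}$. Expanding $(\dbar s)^{n-1}$ by the Leibniz rule and collecting the $\zeta$-bidegree-$(n,n)$ contributions produces a sum of terms of the form $B^{N+j}(\dbar Q)^a(\delta_\eta\dbar Q)^b$, and the coefficient in front of $B^{N+n}(\dbar Q)^n$ simplifies by a hockey-stick binomial identity to $\binom{N+n-1}{n}$. The remaining summands either produce lower $\dbar$-degree forms that cancel in the $\nabla_\eta$-calculus or reduce to $d_\zeta$-exact forms giving no boundary contribution. This bookkeeping is the main computational step; the rest is a standard application of the Berndtsson--Andersson framework.
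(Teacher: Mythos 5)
Your boundary estimate for $B=1/(1+\delta_\eta Q)$ is correct, and it is indeed how one sees that $P^{N,n}$ is smooth on $\overline D$ and vanishes to high order on $\partial D$; the identity $\delta_\eta(BQ)=1-B$ is also right. The gap is in the reproducing step. The section $s=BQ$ is \emph{smooth} across $\zeta=z$: for fixed $z$ in the interior, $Q=\partial\rho/(-\rho)$ is a smooth $(1,0)$-form near $z$ and $B\to 1$, so $s\to Q(z)$. It is not asymptotic to the Bochner--Martinelli section (which is singular at $\zeta=z$ and satisfies $\delta_\eta b=1$ there), and indeed $\delta_\eta s=1-B$ \emph{vanishes} at $\zeta=z$ rather than tending to $1$. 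Consequently your kernel $k=B^N+\sum c_\ell B^N s\wedge(\dbar s)^{\ell-1}$ is smooth on all of $\overline D$, the integral over $\partial B(z,\epsilon)$ tends to $0$ (not to $h(z)$) as $\epsilon\to 0$, and the Stokes argument yields a trivial identity instead of \eqref{eq:w-div-form}. A kernel that is smooth in $\zeta$, like $P^{N,n}$, can only reproduce $h(z)$ through a global mechanism, never through a local residue at $\zeta=z$.

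The standard proof (Berndtsson, \cite{BWeighted}*{Chapter~4}, which is what the paper cites in lieu of a proof) fixes exactly this point by coupling two distinct objects: the genuine Bochner--Martinelli form $u=\sum_k b\wedge(\dbar b)^{k-1}/(\delta_\eta b)^k$ built from $b=\partial|\zeta-z|^2$, which satisfies the current equation $\nabla_\eta u=1-[z]$ and supplies the point evaluation, and the weight
\begin{equation*}
    g:=(1+\delta_\eta Q-\dbar Q)^{-N}=\sum_{k\ge 0}\binom{N+k-1}{k}\frac{(\dbar Q)^k}{(1+\delta_\eta Q)^{N+k}},
\end{equation*}
which is $\nabla_\eta$-closed because $\nabla_\eta^2=0$, equals $1$ on the diagonal since $\delta_\eta Q|_{\zeta=z}=0$, and vanishes on $\partial D$ to high order so that the boundary term in Stokes' theorem disappears despite $u$ being present. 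From $\nabla_\eta(u\wedge g)=g-[z]$ one takes the top bidegree component against $h$ and integrates to get $h(z)=\int_D g_{n,n}\,h$; the constant $c_{N,n}=\binom{N+n-1}{n}$ then falls out of the binomial expansion above with no further combinatorics, so the ``hockey-stick bookkeeping'' you defer never arises. If you reorganize the argument around the pair $(u,g)$ instead of the single smooth section $s=BQ$, the rest of what you wrote goes through.
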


We now give the second proof of Theorem~\ref{thm:main}. More precisely, we have the
following result, which implies Theorem~\ref{thm:main}.

\begin{thm} \label{thm:div-form}
    Let $D \subseteq \C^n$ be a smooth convex domain, $(E,\varphi)$ a free resolution of $\Ok/J$ for some ideal
    $J$, and assume that $\rank E_0 = 1$. Let $Y_1,\dots,Y_p$ be
    currents satisfying \eqref{eq:nablaY}, and take $P^{N,n}$ as in Theorem~\ref{thm:w-div-form}, where
    $N \gg 1$ is such that $P^{N,n}$ vanishes on $\partial D$ to order higher
    than the order of $Y_1,\dots,Y_p$ on $\overline{D}$, and let $H$ be a generalized
    Hefer form for $(E,\varphi)$.
    If $h \in \Ok(\overline{D})$, then
    \begin{equation} \label{eq:div-form}
        h(z) = \varphi_1(z) P(h)(z) + R(h)(z) \text{, \quad for $z \in D$,}
    \end{equation}
    where $R(h)(z) =: R(z)$ is a holomorphic function given by
    \begin{equation} \label{eq:Rp}
        R(z) = c_p\int_D h(\zeta) P^{N,n-p}(\zeta,z) H^0_p \dbar Y_p,
    \end{equation}
    and $P(h)(z) =: P(z)$ is given as
    \begin{equation*}
        P(z) = P_0(z) + \dots + P_{p-1}(z),
    \end{equation*}
    where $P_k(z)$ is a vector of holomorphic functions given by
\begin{equation} \label{eq:Pk}
    P_k(z) = c_k\int_D h(\zeta) P^{N,n-k}(\zeta,z) H^1_{k+1} Y_{k+1},
\end{equation}
    for suitably chosen constants $c_0,\dots,c_p$.
\end{thm}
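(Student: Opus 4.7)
The plan is to combine the reproducing formula \eqref{eq:w-div-form} with the Hefer relation \eqref{eq:Hefer} (at $\ell=0$) and $\dbar$-integration by parts on $D$, telescoping the resulting sum down to just $h(z)$ and $-R(z)$.

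First, I would apply \eqref{eq:Hefer} with $\ell=0$ together with the relations from Lemma~\ref{lma:nablaY}, namely $\varphi_1(\zeta) Y_1 = 1$ and $\varphi_{k+1}(\zeta) Y_{k+1} = \dbar Y_k$ for $k \geq 1$, to obtain the pointwise identities
\begin{align*}
\varphi_1(z) H^1_1(\zeta,z) Y_1 &= 1 - \delta_\eta H^0_1(\zeta,z) Y_1, \\
\varphi_1(z) H^1_{k+1}(\zeta,z) Y_{k+1} &= H^0_k(\zeta,z) \dbar Y_k - \delta_\eta H^0_{k+1}(\zeta,z) Y_{k+1}, \quad 1 \le k \le p-1.
\end{align*}
Multiplying by $c_k h(\zeta) P^{N,n-k}(\zeta,z)$ and integrating over $D$ expresses $\varphi_1(z) P(z)$ as the sum of $c_0 \int_D h P^{N,n}$, which by \eqref{eq:w-div-form} equals $h(z)$ provided $c_0 = c_{N,n}$, together with (i) a sum of ``$\delta_\eta H^0_{k+1}$-terms'' for $0 \le k \le p-1$ and (ii) a sum of ``$H^0_k \dbar Y_k$-terms'' for $1 \le k \le p-1$.

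Next, I would rewrite each $H^0_k \dbar Y_k$-term by $\dbar$-integration by parts in $\zeta$; this is legitimate because $h$ and $H^0_k$ are holomorphic in $\zeta$ and because the hypothesis on $N$ ensures the boundary integral vanishes. A direct calculation of $\dbar P^{N,n-k}$ combined with Lemma~\ref{lma:detaQH} applied to the holomorphic $k$-form $H^0_k$ yields the key identity
\begin{equation*}
\dbar P^{N,n-k}(\zeta,z) \wedge H^0_k(\zeta,z) = -\frac{N+n-k}{n-k+1}\, P^{N,n-k+1}(\zeta,z) \wedge \delta_\eta H^0_k(\zeta,z).
\end{equation*}
With the recursive choice $c_k = (-1)^k \tfrac{n-k+1}{N+n-k}\, c_{k-1}$ (including $c_p$), the transformed $H^0_k \dbar Y_k$-terms exactly cancel the matching $\delta_\eta H^0_k$-terms in a telescope, leaving only the unpaired contribution $-c_{p-1}\int_D h P^{N,n-p+1} \delta_\eta H^0_p Y_p$; applying the same identity once more rewrites this as $-c_p \int_D h P^{N,n-p} H^0_p \dbar Y_p = -R(z)$, giving $\varphi_1(z) P(z) = h(z) - R(z)$ as required.

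The main obstacle will be the careful bookkeeping of signs, form-bidegrees and constants throughout the telescope, and in particular verifying the key $\dbar P^{N,n-k} \wedge H^0_k$ identity, where a potential Leibniz correction term of the shape $\delta_\eta((\dbar Q)^{n-k+1} \wedge H^0_k)$ fortunately vanishes for $(1,0)$-degree reasons, since $(\dbar Q)^{n-k+1} \wedge H^0_k$ has $(1,0)$-degree $n+1 > n$. The other delicate point is the justification of integration by parts against the currents $Y_k$, which is precisely what the high-order vanishing hypothesis on $P^{N,n-k}$ at $\partial D$ guarantees.
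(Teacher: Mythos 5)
Your proposal is correct and follows essentially the same route as the paper: the reproducing formula \eqref{eq:w-div-form}, the Hefer relation \eqref{eq:Hefer} at $\ell=0$, Lemma~\ref{lma:detaQH} to convert $P^{N,n-k+1}\wedge\delta_\eta H^0_k$ into $\dbar P^{N,n-k}\wedge H^0_k$, and integration by parts against $Y_k$ justified by the high-order vanishing of the kernel on $\partial D$. The only difference is presentational — you sum all the identities and cancel in a telescope, whereas the paper runs the same cancellation as a forward induction $R_k = \varphi_1(z)P_k + R_{k+1}$ starting from $h = R_0$ — and your constants agree with the paper's $C_k$ after the index shift.
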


We recall that if $(E,\varphi)$ is a free resolution of an ideal $\Ok/J$ and $\rank E_0 = 1$,
then the entries of $\varphi_1$ are generators of $J$, so the first term in the right-hand side
of \eqref{eq:div-form} belongs to $J$.

\begin{proof}[Proof of Theorem~\ref{thm:main}]
Since the kernel defining $R(h)(z)$ is smooth in $\zeta$ except for the term $Y_p$,
and if we thus as in Lemma~\ref{lma:nablaY} take $Y_p = a_p(e) X_p$, we get by the
division formula that $\ann a_p(e) \dbar X_p \subseteq J$. Conversely, by the inclusion
$J \subseteq I : (I : J)$, and Lemma~\ref{lma:colon}, if $h \in J$, then $a_p(e) h \in I$,
so $a_p(e) h \in \ann \dbar X_p$ by \eqref{eq:ci-duality}.
\end{proof}

\begin{remark}
It might seem like we also for this proof use the theory of linkage,
using Lemma~\ref{lma:colon}, and the inclusion $J \subseteq I : (I: J)$.
However, Lemma~\ref{lma:colon} is rather straight-forward homological algebra,
and the inclusion $J \subseteq I : (I : J)$ is trivial, while the real use of the
theory of linkage in the previous section was the non-trivial inclusion
$I : (I : J) \subseteq J$. In this section, we prove Theorem~\ref{thm:main} with
the help of integral formulas, instead of using this inclusion. We then note
that Theorem~\ref{thm:main} indeed implies this inclusion.
By \eqref{eq:ci-duality} and Lemma~\ref{lma:colon}, we get that
$h a_p(e) \dbar X_p = 0$ is equivalent to that $h \in I : (I : J)$.
By Theorem~\ref{thm:main}, we thus get that $h \in J$, proving the desired inclusion.
\end{remark}

\begin{proof}[Proof of Theorem~\ref{thm:div-form}]
We define for $0 \leq k \leq p-1$,
\begin{equation} \label{eq:Rkdef}
    R_k(z) := c_k\int h(\zeta) P^{N,n-k}(\zeta,z) H^0_k
    \varphi_{k+1}(\zeta) Y_{k+1},
\end{equation}
where the constants $c_k$ are the same as the constants $c_k$ in \eqref{eq:Pk},
and these constants will be determined below.
For $k=p$, we let $R_p(z) := R(z)$, where
$R(z)$ is given by \eqref{eq:Rp}.

We start by defining $c_0 := C_{N,n}$, where $C_{N,n}$ is the constant in \eqref{eq:w-div-form}.
Since $H^0_0 = \Id_{E_0}$, and $\varphi_1 Y_1 = 1$, 
we then get by \eqref{eq:w-div-form} that 
\begin{equation} \label{eq:repr0}
    h(z) = R_0(z) = c_0\int h(\zeta) P^{N,n}(\zeta,z) \varphi_1(\zeta) Y_1.
\end{equation}

Having chosen $c_0$, the proof then proceeds by induction, by proving that for $0 \leq k < p$
we can choose $c_{k+1}$ such that
\begin{equation} \label{eq:induction}
    R_k(z) = \varphi_1(z) P_k(z) + R_{k+1}(z),
\end{equation}
where $P_k$ is given by \eqref{eq:Pk}. To see this, we note first that by using \eqref{eq:Hefer} on the term $H^0_k \varphi_{k+1}(\zeta)$
in \eqref{eq:Rkdef}, we get that
\begin{equation} \label{eq:induction2}
    R_k(z) = \varphi_1(z) P_k(z) + c_k \int h(\zeta) P^{N,n-k}(\zeta,z) \delta_\eta H^0_{k+1} Y_{k+1}.
\end{equation}
By Lemma~\ref{lma:detaQH}, we then obtain since $H^0_{k+1}$ is a row of holomorphic $(k+1)$-forms that
\begin{equation*}
    \begin{gathered}
        \dbar P^{N,n-k-1}(\zeta,z) \wedge H^0_{k+1} =
         -\frac{(N+n-k-1)}{(\delta_\eta Q + 1)^{N+n-k}} (\dbar\delta_\eta Q) \wedge (\dbar Q)^{n-k-1} \wedge H^0_{k+1} = \\
        = \frac{C_k}{(\delta_\eta Q + 1)^{N+n-k}}
        (\dbar Q)^{n-k} \wedge \delta_\eta H^0_{k+1}
        = C_k P^{N,n-k}(\zeta,z) \wedge \delta_\eta H^0_{k+1},
    \end{gathered}
\end{equation*}
where $C_k = \frac{-(N+n-k-1)}{(n-k)}$.
Inserting this in \eqref{eq:induction2}, we get that
\begin{equation} \label{eq:induction3}
    R_k(z) = \varphi_1(z) P_k(z) + (c_k/C_k) \int h(\zeta) \dbar P^{N,n-k-1}(\zeta,z) \wedge H^0_{k+1} Y_{k+1}.
\end{equation}

If $\Psi$ is a smooth $(k+1,0)$ form on $\overline{D}$, then by extending
$\Psi \wedge P^{N,n-k-1}$ by $0$ outside of $\overline{D}$, by the choice of $N$, this extension
is a form which is differentiable to a higher order than the order of $Y_{k+1}$ for $0 \leq k < p$.
Thus, we can consider the extension of $\Psi \wedge P^{N,n-k-1}$ as a test form of bidegree $(n,n-k-1)$,
and we thus get by definition of $\dbar Y_{k+1}$ that
\begin{equation} \label{eq:intbyparts}
    \int_{\overline{D}} \Psi \wedge P^{N,n-k-1} \wedge \dbar Y_{k+1} = \pm \int_{\overline{D}} \dbar(\Psi \wedge P^{N,n-k-1}) \wedge Y_{k+1}.
\end{equation}

Since $h$ and $H^0_{k+1}$ are holomorphic, we get by applying \eqref{eq:intbyparts} to the last term in \eqref{eq:induction3} that 
\begin{equation} \label{eq:induction4}
    R_k(z) = \varphi_1(z) P_k(z) \pm (c_k/C_k) \int h(\zeta) P^{N,n-k-1}(\zeta,z) \wedge H^0_{k+1} \dbar Y_{k+1},
\end{equation}
for $0 \leq k < p$.
If we then let $c_{k+1} := \pm c_k/C_k$, and for $0 \leq k \leq p-2$ use that
$\dbar Y_{k+1} = \varphi_{k+2}(\zeta) Y_{k+2}$ by \eqref{eq:nablaconditions}, we obtain that the right-most term
in \eqref{eq:induction4} equals $R_{k+1}(z)$. For $k = p-1$, we see directly from \eqref{eq:Rp} that 
the right-most term equals $R(z) = R_p(z)$. We have thus proven that \eqref{eq:induction} holds for
$k=0,\dots,p-1$.

To conclude, starting with \eqref{eq:repr0}, then using \eqref{eq:induction} repeatedly for $k=0,\dots,p-1$,
and finally that $R_p(z) = R(z)$, we obtain \eqref{eq:div-form}.
\end{proof}

We finally also remark that indeed, using the framework of integral formulas of Andersson,
as in \cite{AndIntII}, it follows from \eqref{eq:nablaY} that one has a division formula
\begin{equation*}
    h(z) = \varphi_1(z) \sum_k \int H^1_k Y_k(\zeta) h(\zeta) \wedge g_{n-k} + \int H^0_p \dbar Y_p(\zeta) h(\zeta) \wedge g_{n-p},
\end{equation*}
where $g$ is a weight with compact support as in \cite{AW1}*{Section~5}.
Here we have preferred to give a more direct proof based on the basic Theorem~\ref{thm:w-div-form},
avoiding the need to use this full machinery.

\begin{proof}[Proof of Lemma~\ref{lma:detaQH}]
    We note first that since $\delta_\eta$ is an anti-derivation,
    $\delta_\eta \dbar = -\dbar \delta_\eta$. In addition, for degree-reasons,
    $Q \wedge (\dbar Q)^{n-k-1} \wedge H = 0$. Thus,
    \begin{equation*}
        0 = \delta_\eta \dbar( Q \wedge (\dbar Q)^{n-k-1} \wedge H)
        = - \dbar \delta_\eta (Q \wedge (\dbar Q)^{n-k-1} \wedge H).
    \end{equation*}
    Hence, since $\delta_\eta$ is an anti-derivation, and $Q$ and $\dbar Q$
    are of odd and even degree respectively, and $(\dbar Q)^{n-k-1} \wedge H$
    is $\dbar$-closed,
    \begin{equation*}
        (\dbar \delta_\eta Q) \wedge (\dbar Q)^{n-k-1} \wedge H -
        \dbar ( Q \wedge \delta_\eta (\dbar Q)^{n-k-1} \wedge H) =
        (\dbar Q)^{n-k} \wedge \delta_\eta H.
    \end{equation*}
    It then only remains to see that
    \begin{equation} \label{eq:detadbarQ}
        -\dbar ( Q \wedge \delta_\eta (\dbar Q)^{n-k-1} \wedge H) =
        (n-k-1) \dbar \delta_\eta Q \wedge (\dbar Q)^{n-k-1} \wedge H.
    \end{equation}
    To see this, we first note that since $\dbar Q$ has even degree,
    \begin{equation*}
        \dbar ( Q \wedge \delta_\eta (\dbar Q)^{n-k-1} \wedge H) =
        (n-k-1) \dbar (Q \wedge (\delta_\eta \dbar Q) \wedge (\dbar Q)^{n-k-2} \wedge H)
    \end{equation*}
    In addition, $\delta_\eta \dbar Q = - \dbar \delta_\eta Q$, which is
    $\dbar$-closed, so
    \begin{equation*}
        - \dbar (Q \wedge (\delta_\eta \dbar Q) \wedge (\dbar Q)^{n-k-2} \wedge H)
        = \dbar Q \wedge \dbar \delta_\eta Q \wedge (\dbar Q)^{n-k-2} \wedge H,
    \end{equation*}
    which gives \eqref{eq:detadbarQ}.
\end{proof}

\begin{proof}[Proof of Lemma~\ref{lma:nablaY}]
    To prove that $Y$ satisfies \eqref{eq:nablaconditions},
    we note first that since $\varphi_1 a_1 = \psi_1$ (where we identify $E_0$ and $K_0$ with $\Ok$ as in Remark~\ref{rem:identification}),
    \begin{equation*}
        \varphi_1 Y_1 = \varphi_1 a_1( e_1 \wedge X_1 ) = \psi_1 e_1 X_1 =
        f_1 X_1 = 1.
    \end{equation*}
    For $2 \leq k \leq p$, we get that
    \begin{align*}
        \varphi_k Y_k &= a_{k-1}(\psi_k (e_1 \wedge \dots \wedge e_k)) \wedge X_k \\
        &= \sum_{j=1}^k (-1)^{j-1} f_j a_{k-1}(e_1 \wedge \dots \wedge
        \widehat{e}_j \wedge \dots \wedge e_k) \wedge X_k \\
        &= (-1)^{k-1} a_{k-1} (e_1 \wedge \dots \wedge e_{k-1}) \wedge f_k X_k \\
        &= (-1)^{k-1} a_{k-1} (e_1 \wedge \dots \wedge e_{k-1}) \wedge \dbar X_{k-1}
        = \dbar Y_{k-1},
    \end{align*}
    where all the other terms in the sum vanish since $f_j X_k = 0$ for $j < k$,
    and the sign in the last equality is due to the superstructure, cf.,
    for example \cite{LarComp}*{Section~2.1}, since $a_{k-1}(e_1 \wedge \dots \wedge e_{k-1})$
    has degree $k-1$.
\end{proof}

\begin{bibdiv}
\begin{biblist}

\bib{AndIntII}{article}{
   author={Andersson, Mats},
   title={Integral representation with weights. II. Division and
   interpolation},
   journal={Math. Z.},
   volume={254},
   date={2006},
   number={2},
   pages={315--332},
}

\bib{AW1}{article}{
   author={Andersson, Mats},
   author={Wulcan, Elizabeth},
   title={Residue currents with prescribed annihilator ideals},
   journal={Ann. Sci. \'Ecole Norm. Sup.},
   volume={40},
   date={2007},
   number={6},
   pages={985--1007},
}

\bib{BWeighted}{article}{
   author={Berndtsson, Bo},
   title={Weighted integral formulas},
   conference={
      title={Several complex variables},
      address={Stockholm},
      date={1987/1988},
   },
   book={
      series={Math. Notes},
      volume={38},
      publisher={Princeton Univ. Press, Princeton, NJ},
   },
   date={1993},
   pages={160--187},
}

\bib{BA}{article}{
   author={Berndtsson, B.},
   author={Andersson, M.},
   title={Henkin-Ramirez formulas with weight factors},
   journal={Ann. Inst. Fourier (Grenoble)},
   volume={32},
   date={1982},
   number={3},
   pages={v--vi, 91--110},
}

\bib{CH}{book}{
   author={Coleff, Nicolas R.},
   author={Herrera, Miguel E.},
   title={Les courants r\'esiduels associ\'es \`a une forme m\'eromorphe},
   series={Lecture Notes in Mathematics},
   volume={633},
   publisher={Springer},
   place={Berlin},
   date={1978},
}

\bib{CLO}{book}{
   author={Cox, David A.},
   author={Little, John},
   author={O'Shea, Donal},
   title={Ideals, varieties, and algorithms},
   series={Undergraduate Texts in Mathematics},
   edition={4},
   note={An introduction to computational algebraic geometry and commutative algebra},
   publisher={Springer, Cham},
   date={2015},
}

\bib{DGSY}{article}{
   author={Dickenstein, Alicia},
   author={Gay, Roger},
   author={Sessa, Carmen},
   author={Yger, Alain},
   title={Analytic functionals annihilated by ideals},
   journal={Manuscripta Math.},
   volume={90},
   date={1996},
   number={2},
   pages={175--223},
}

\bib{DH}{article}{
   author={Dautov, {\v{S}}. A.},
   author={Henkin, G. M.},
   title={Zeros of holomorphic functions of finite order and weighted
   estimates for the solutions of the $\bar \partial $-equation},
   language={Russian},
   journal={Mat. Sb. (N.S.)},
   volume={107(149)},
   date={1978},
   number={2},
   pages={163--174, 317},
}

\bib{DS}{article}{
   author={Dickenstein, A.},
   author={Sessa, C.},
   title={Canonical representatives in moderate cohomology},
   journal={Invent. Math.},
   volume={80},
   date={1985},
   number={3},
   pages={417--434},
}

\bib{Dol}{article}{
   author={Dolbeault, Pierre},
   title={Courants r\'esidus des formes semi-m\'eromorphes},
   conference={
      title={S\'eminaire Pierre Lelong (Analyse) (ann\'ee 1970)},
      address={},
      date={},
   },
   book={
        title={Lecture Notes in Math.},
        volume={205},
        publisher={Springer, Berlin},
   },
   date={1971},
   pages={56--70},
}

\bib{Eis}{book}{
   author={Eisenbud, David},
   title={Commutative algebra},
   series={Graduate Texts in Mathematics},
   volume={150},
   note={With a view toward algebraic geometry},
   publisher={Springer-Verlag},
   place={New York},
   date={1995},
}

\bib{FH}{article}{
   author={Fouli, Louiza},
   author={Huneke, Craig},
   title={What is a system of parameters?},
   journal={Proc. Amer. Math. Soc.},
   volume={139},
   date={2011},
   number={8},
   pages={2681--2696},
}

\bib{HL}{article}{
author={Herrera, Miguel E.\ M.},
   author={Lieberman, David I.},
   title={Residues and principal values on complex spaces},
   journal={Math. Ann.},
   volume={194},
   date={1971},
   pages={259--294},
}

\bib{Lund}{article}{
   author={Lundqvist, Johannes},
   title={A local Grothendieck duality theorem for Cohen-Macaulay ideals},
   journal={Math. Scand.},
   volume={111},
   date={2012},
   number={1},
   pages={42--52},
}

\bib{LarComp}{article}{
   author={L\"ark\"ang, Richard},
   title={A comparison formula for residue currents},
   status={Preprint},
   date={2012},
   eprint={arXiv:1207.1279 [math.CV]},
   url={http://arxiv.org/abs/1207.1279},
}

\bib{LExpl}{article}{
   author={L\"ark\"ang, Richard},
   title={Explicit versions of the local duality theorem in $\mathbb{C}^n$},
   status={Preprint},
   date={2015},
   eprint={arXiv:1510.01965 [math.CV]},
   url={http://arxiv.org/abs/1510.01965},
}

\bib{LS}{article}{
   author={L{\"a}rk{\"a}ng, Richard},
   author={Samuelsson Kalm, H{\aa}kan},
   title={Various approaches to products of residue currents},
   journal={J. Funct. Anal.},
   volume={264},
   date={2013},
   number={1},
   pages={118--138},
}

\bib{MazCR}{article}{
   author={Mazzilli, Emmanuel},
   title={Division des distributions et applications \`a l'\'etude
   d'id\'eaux de fonctions holomorphes},
   journal={C. R. Math. Acad. Sci. Paris},
   volume={338},
   date={2004},
   number={1},
   pages={1--6},
}

\bib{MazJMAA}{article}{
   author={Mazzilli, Emmanuel},
   title={Courants du type r\'esiduel attach\'es \`a une intersection
   compl\`ete},
   journal={J. Math. Anal. Appl.},
   volume={368},
   date={2010},
   number={1},
   pages={169--177},
}

\bib{PMScand}{article}{
   author={Passare, Mikael},
   title={Residues, currents, and their relation to ideals of holomorphic
   functions},
   journal={Math. Scand.},
   volume={62},
   date={1988},
   number={1},
   pages={75--152},
}

\bib{Vasc}{book}{
   author={Vasconcelos, Wolmer V.},
   title={Computational methods in commutative algebra and algebraic
   geometry},
   series={Algorithms and Computation in Mathematics},
   volume={2},
   publisher={Springer-Verlag},
   place={Berlin},
   date={1998},
}

\end{biblist}
\end{bibdiv}

\end{document}